\title{Chromatic number is Ramsey distinguishing}
\date{9 March 2022}
\author{Michael Savery\footnote{Present address: Mathematical Institute, University of Oxford, Oxford OX2 6GG, UK}}
\newcommand{\Address}{{
  \bigskip
  \footnotesize
  \textsc{Department of Pure Mathematics and Mathematical Statistics, University of Cambridge, Wilberforce Road, Cambridge CB3 0WB, UK}\par\nopagebreak
  \textit{E-mail address}: \texttt{savery@maths.ox.ac.uk}
}}
\begin{document}
\newtheorem{theorem}{Theorem}
\newtheorem{lemma}[theorem]{Lemma}
\newtheorem{question}{Question}
\newtheorem*{claim}{Claim}
\theoremstyle{definition}
\newtheorem{def*}[theorem]{Definition}
\maketitle

\begin{abstract}
A graph $G$ is Ramsey for a graph $H$ if every colouring of the edges of $G$ in two colours contains a monochromatic copy of $H$. Two graphs $H_1$ and $H_2$ are Ramsey equivalent if any graph $G$ is Ramsey for $H_1$ if and only if it is Ramsey for $H_2$. A graph parameter $s$ is Ramsey distinguishing if $s(H_1)\neq s(H_2)$ implies that $H_1$ and $H_2$ are not Ramsey equivalent. In this paper we show that the chromatic number is a Ramsey distinguishing parameter. We also extend this to the multi-colour case and use a similar idea to find another graph parameter which is Ramsey distinguishing.
\end{abstract}

\section{Introduction}\label{intro}

A graph $G$ is Ramsey for a graph $H$ if every colouring of the edges of $G$ in two colours contains a monochromatic copy of $H$, and we denote this by $G\rightarrow H$. We write $G\not \rightarrow H$ if $G$ is not Ramsey for $H$, and we denote the set of graphs which are Ramsey for $H$ by $\mathcal{R}(H)$. Two graphs $H_1$ and $H_2$ are Ramsey equivalent if $\mathcal{R}(H_1)=\mathcal{R}(H_2)$. If $H_1$ and $H_2$ are Ramsey equivalent then we write $H_1\sim H_2$, and we write $H_1\not \sim H_2$ if they are not. It is clear that Ramsey equivalence is an equivalence relation. The notion of Ramsey equivalence was first introduced by Szab\'o, Zumstein, and Z\"urcher in \cite{szabo}.

One of the fundamental questions concerning Ramsey equivalence, first posed by Fox, Grinshpun, Liebenau, Person, and Szab\'o in \cite{fox} but also touched upon in \cite{szabo}, is whether there exist two non-isomorphic connected graphs which are Ramsey equivalent. Indeed, the only non-isomorphic pairs of graphs known to be Ramsey equivalent are of the form $(H_1,H_2)$ where $H_1$ is a clique on $n$ vertices and $H_2$ is a disjoint union of a clique on $n$ vertices and some graph with clique number less than $n$ (addressed in \cite{bloom},  \cite{fox}, and \cite{szabo}).

One approach that can be used to investigate Ramsey non-equivalence is to consider graph parameters $s$ for which $s(H_1)\neq s(H_2)$ implies that $H_1$ and $H_2$ are not Ramsey equivalent. Such parameters are called Ramsey distinguishing. Two important graph parameters known to be Ramsey distinguishing are the clique number $\omega$ and the odd girth $g_o$. Indeed, in \cite{nesetril} Ne\v{s}et\v{r}il and R\"{o}dl built on Folkman's work in \cite{folkman} to show that for every graph $H$ there exists a graph $G$ with $\omega(G)=\omega(H)$ such that $G\rightarrow H$. Then similarly in \cite{nesetril_2} they showed that for every graph $H$ there exists a graph $G$ with $g_o(G)=g_o(H)$ such that $G\rightarrow H$.

The only other graph parameter previously shown to be non-trivially Ramsey distinguishing (up to a small technicality) is the 2-density $m_2$, defined for a graph $G$ with at least three vertices by \[m_2(G) = \max_{\substack{H\subseteq G\\|V(H)|\geq 3}} \bigg\{\frac{|E(H)|-1}{|V(H)|-2}\bigg\},\] where $H\subseteq G$ means $H$ is a subgraph of $G$.

Indeed, let $G(n,p)$ be the random graph on $n$ vertices in which each edge is present independently with probability $p$. A theorem of R\"{o}dl and Ruci\'{n}ski proved in \cite{RR1,RR2,RR3} (see also \cite{nenadov} for a shorter proof due to Nenadov and Steger) states that for each fixed graph $F$ which is not a forest of stars and paths of length 3, there exist positive constants $c$ and $C$ such that \[\lim_{n\rightarrow\infty}\mathbb{P}\big(G(n,p) \rightarrow F\big) = \begin{cases} 0 &\text{if $p\leq cn^{-1/m_2(F)}$},\\ 1 &\text{if $p\geq Cn^{-1/m_2(F)}$}.
\end{cases}\] It is a straightforward consequence of this theorem that among graphs which are not forests of stars and paths of length 3, the 2-density is Ramsey distinguishing.

If a graph has no component of size at least 3, then clearly it can only be Ramsey equivalent to graphs which also have this property. Any forest with a component of size at least 3 has 2-density equal to 1, and it was shown by Axenovich, Rollin, and Ueckerdt in \cite{axenovich} that no forest is Ramsey equivalent to a graph containing a cycle. So in fact the 2-density is a Ramsey distinguishing parameter among graphs which have a component of size at least 3, and graphs which do not have such a component are not Ramsey equivalent to any which do.

In this paper we consider another graph parameter, the chromatic number, $\chi$. This is defined as the minimum number of colours required in a proper vertex colouring of the graph, that is, in a colouring of the vertices of the graph in which no two adjacent vertices receive the same colour.

The smallest chromatic number of a graph in $\mathcal{R}(H)$, called the chromatic Ramsey number of $H$, was determined for all $H$ by Burr, Erd\H{o}s, and Lov\'asz in \cite{burr}. It is not known whether there exists a pair of graphs with the same chromatic Ramsey number but different chromatic numbers, although it seems likely that such graphs do exist. Indeed, the chromatic Ramsey number of $K_4$, the complete graph on four vertices, is 18 (since we can 2-edge-colour any 17-vertex-colourable graph without a monochromatic $K_4$ by naturally extending from such a colouring of $K_{17}$), but it was shown in \cite{zhu} that there exists a graph of chromatic number five and chromatic Ramsey number 17. Regardless, this illustrates that the chromatic Ramsey number is unlikely to be a useful tool for determining whether or not the chromatic number is Ramsey distinguishing, unlike the corresponding quantities in the clique number and odd girth cases.

The Ramsey distinguishing properties of the chromatic number were investigated by Axenovich, Rollin, and Ueckerdt in \cite{axenovich}. They observe that if $G$ and $H$ are graphs with $G$ bipartite and $\chi(H)>2$ then $G\not\sim H$ since any sufficiently large complete bipartite graph is Ramsey for $G$ \cite{beineke} but does not contain $H$, and hence chromatic number is distinguishing for bipartite graphs.

They go on to establish that if $G$ and $H$ are graphs with $\chi(G)<\chi(H)$ and with $G$ \textit{clique-splittable} or \textit{odd-girth-splittable}, meaning that the vertex set of $G$ can be partitioned into two subsets each inducing a graph of smaller clique number than $G$ or greater odd girth than $G$ respectively, then $G\not \sim H$. We note that the odd-girth-splittability result in particular implies that chromatic number is Ramsey distinguishing for graphs of chromatic number three and four since such graphs are odd-girth-splittable.

The main result of this paper is that the chromatic number is a Ramsey distinguishing parameter.
\begin{theorem}\label{main}
Let $G$ and $H$ be graphs with $\chi(G)<\chi(H)$. Then there exists a graph $F$ such that $F\rightarrow G$ and $F\not \rightarrow H$.
\end{theorem}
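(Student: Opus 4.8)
The plan is to produce a single graph $F$ witnessing both $F\rightarrow G$ and $F\not\rightarrow H$; for the latter it suffices to exhibit one $2$-colouring of $E(F)$ with no monochromatic copy of $H$. Two simple facts drive everything. Since $\chi(H)>\chi(G)$ there is no homomorphism from $H$ to $G$ (a homomorphism cannot raise the chromatic number), so any graph admitting a homomorphism to $G$ — and more generally any graph of chromatic number less than $\chi(H)$ — is $H$-free. Secondly, if $\chi(H)\ge 3$ then $H$ contains a cycle, so any graph of girth greater than $|V(H)|$ is also $H$-free. First I would reduce to the essential case: replacing $H$ by a connected component of maximum chromatic number lets me assume $H$ is connected, since a monochromatic $H$ contains a monochromatic copy of that component; the cases $\chi(G)\le 1$ (take $F$ edgeless on $|V(G)|$ vertices) and $\chi(H)=2$ (which forces $\chi(G)\le 1$) are immediate, so I may assume $2\le\chi(G)<\chi(H)$ and $\chi(H)\ge 3$.

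The colouring I aim for uses these two $H$-free mechanisms on the two colours: one class will be homomorphic to $G$, the other will have large girth. It is worth recording why a cruder approach fails. One might try to pull back, along a proper colouring of $F$ by $K_N$, a $2$-colouring of $K_N$ whose two colour classes are each $(\chi(H)-1)$-colourable; but such a colouring of $K_N$ exists precisely when $N\le(\chi(H)-1)^2$, whereas $F\rightarrow G$ forces $\chi(F)$ to be at least the chromatic Ramsey number of $G$, a quantity that can be far larger than $\chi(G)$ (for $G=K_k$ it equals the Ramsey number $R(k,k)$). Indeed, if \emph{both} colour classes had chromatic number below $\chi(H)$ then the product of the two proper colourings would give $\chi(F)\le(\chi(H)-1)^2$, incompatible with $F\rightarrow G$. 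Hence at least one colour class must carry the large chromatic number of $F$ and must avoid $H$ for a reason other than low chromatic number; this is exactly what the girth mechanism supplies.

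Concretely, I would build $F$ by a Nešetřil–Rödl-type partite (amalgamation) construction with girth control, arranging that $F\rightarrow G$ while the edge set splits as $E(F)=E_1\cup E_2$ with $(V(F),E_1)$ admitting a homomorphism to $G$ and $(V(F),E_2)$ having girth greater than $|V(H)|$. The construction assembles $F$ from many copies of $G$ amalgamated along a high-girth incidence pattern; the intention is that the intra-copy edges form $E_1$, respecting a global homomorphism to $G$, while the amalgamating edges form a high-girth graph $E_2$ that carries the large chromatic number and hence the Ramsey property. Colouring $E_1$ red and $E_2$ blue then yields a $2$-colouring with no monochromatic $H$: the red graph is homomorphic to $G$ and so $H$-free, while the blue graph has girth exceeding $|V(H)|$ and so contains no cycle of $H$, hence no copy of $H$. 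Combined with $F\rightarrow G$, this gives $F\not\rightarrow H$.

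The main obstacle is precisely this construction: producing a Ramsey graph for $G$ whose cycle-rich structure is confined to a subgraph homomorphic to $G$, with a high-girth complement. This is delicate because the Ramsey property forces $F$ to be rich (every $2$-colouring must contain a monochromatic $G$), yet I need the large chromatic number to live in a sparse, high-girth part while the cycle-rich part stays homomorphic to $G$. I expect this to require carrying, as invariants through the partite amalgamation, both the homomorphism of the dense part to $G$ and a girth lower bound on the sparse part; controlling the girth of the amalgam, so that no short cycles are created across copies of $G$, is the technical heart. The hypothesis $\chi(G)<\chi(H)$ itself enters only in the final colouring analysis: through the absence of a homomorphism from $H$ to $G$, which makes the red class $H$-free, and through $\chi(H)\ge 3$ (so that $H$ has a cycle), which makes the high-girth blue class $H$-free.
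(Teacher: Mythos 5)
Your reductions and your final colouring analysis are fine, but the proof has a genuine gap at exactly the point you flag as ``the main obstacle'': you never construct a graph $F$ with $F\rightarrow G$ whose edge set splits into a part $E_1$ homomorphic to $G$ and a part $E_2$ of girth greater than $|V(H)|$, and there is reason to doubt that your sketch can be completed as described. In amalgamation constructions of Ramsey graphs, the union of the planted copies of $G$ (your intended $E_1$) must be such that every small fraction of the vertices contains a copy of $G$ --- this is what drives the focusing/amplification --- so it has independence number $o(|V(F)|)$ and hence unbounded chromatic number; such a graph cannot be homomorphic to $G$. Conversely, since $\chi(G)\geq 3$ every copy of $G$ contains a short cycle, so a high-girth $E_2$ cannot by itself supply the monochromatic copies of $G$ that an arbitrary colouring must produce; the burden falls back on $E_1$, the part you want to keep $\chi(G)$-colourable. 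So the two invariants you propose to carry through the amalgamation pull in opposite directions, and no mechanism for reconciling them is supplied.

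The paper escapes the dichotomy you set up (one class homomorphic to $G$, the other of high girth) by weakening the first mechanism from a global to a local one: it only requires each colour class to be $\chi(G)$-vertex-colourable on every subset of at most $|V(H)|$ vertices (property $P(N,k_1,k_2)$ with $N=|V(H)|$), which already excludes monochromatic copies of $H$ yet is compatible with $\chi(F)$ being enormous. Your product-colouring obstruction correctly shows that both classes cannot have globally small chromatic number, but it says nothing about this local property. The paper then builds $F$ recursively: $F_{i+1}$ joins an Erd\H{o}s--Hajnal-type amplification $L(F_i,\epsilon,N)$ completely to an independent set of $2|V(G)|$ vertices, and an induction (using the focussing lemma on the bipartite joins) shows both that $F_i$ admits a colouring whose classes are locally $k_1$- and $k_2$-colourable whenever $k_1+k_2=i+2$, and that every $2$-colouring of $F_i$ yields monochromatic $K_{m_1}(M)$ and $K_{m_2}(M)$ in different colours with $m_1+m_2=i+2$; taking $i=2\chi(G)-3$ finishes. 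I would encourage you to rework your argument around a local chromatic invariant rather than girth.
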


In the next section we give some preliminary lemmas and then prove Theorem~\ref{main}. In the final section we explain how to modify the proof of the theorem to extend to the case of $q$-edge-colourings for all integers $q\geq 2$, and we use a similar proof to show that a related parameter (namely the minimum number of vertices which can be given colour 1 in a proper vertex colouring of $G$ in colours $1,\ldots,\chi(G)$) is also Ramsey distinguishing. We then close with a discussion of some interesting open problems in this area.

For a graph $G=(V,E)$ and a set of vertices $S\subseteq V$, we will write $G[S]$ to mean the graph with vertex set $S$ and edge set $\{uv\in E : u,v\in S\}$. We will write $G-S$ to mean $G[V\char`\\ S]$. We denote by $K_r$ the complete graph on $r$ vertices and write $K_r(n)$ for the complete $r$-partite graph with $n$ vertices in each partite class. For graphs $G$ and $H$ we denote by $G+H$ the graph consisting of a copy of $G$ and a copy of $H$ on disjoint vertex sets.

\section{Preliminary lemmas and proof of Theorem~\ref{main}}\label{sect_main_proof}

A $k$-uniform hypergraph is a hypergraph in which every hyperedge has size $k$. By a circuit of length $s$ in a hypergraph $\mathcal{H}=(V,\mathcal{E})$ we mean a sequence of distinct vertices $v_1,\ldots ,v_s\in V$, and a sequence of distinct hyperedges $e_1,\ldots,e_s\in \mathcal{E}$ such that $v_i,v_{i+1}\in e_i$ for $1\leq i<s$ and $v_s,v_1\in e_s$. Note that in particular we consider two distinct hyperedges intersecting in two vertices to give rise to a circuit of length two. By the girth of a hypergraph we mean the length of the shortest circuit in the hypergraph. The independence number of a hypergraph is the size of the largest set of its vertices which does not contain a hyperedge. 

The following lemma was proved by Erd\H{o}s and Hajnal in \cite{erdos}.
\begin{lemma}\label{hyps}
Let $k, N\geq 2$ be integers and let $\epsilon>0$. Then there exists a $k$-uniform hypergraph $\mathcal{H}=(V,\mathcal{E})$ with girth greater than $N$ and independence number less than $\epsilon|V|$.
\end{lemma}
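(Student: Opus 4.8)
The plan is to use the probabilistic deletion method. Fix $n$ large and form a random $k$-uniform hypergraph $\mathcal{H}_0$ on a vertex set $V$ with $|V|=n$ by including each of the $\binom{n}{k}$ possible hyperedges independently with probability $p$, where $p=p(n)$ is to be chosen. I aim to show that for a suitable $p$, with probability tending to $1$ the hypergraph $\mathcal{H}_0$ simultaneously (a) contains fewer than $n/2$ circuits of length at most $N$, and (b) has no independent set of size $\lceil \epsilon n/2\rceil$. Given such an $\mathcal{H}_0$, I would delete one vertex from each circuit of length at most $N$; this removes at most $n/2$ vertices, so the resulting hypergraph $\mathcal{H}$ has at least $n/2$ vertices, and since deleting vertices creates no new circuits it has girth greater than $N$. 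Any independent set in $\mathcal{H}$ is independent in $\mathcal{H}_0$, so its independence number is less than $\epsilon n/2\leq \epsilon|V(\mathcal{H})|$, as required.

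To control the short circuits, let $X$ denote the number of circuits of length at most $N$ in $\mathcal{H}_0$. A circuit of length $s$ is specified by an ordered choice of its $s$ distinct vertices together with, for each of its $s$ hyperedges, a choice of the remaining $k-2$ vertices, so there are at most $n^{(k-1)s}$ such configurations, each present with probability $p^s$. Hence $\mathbb{E}[X]\leq \sum_{s=2}^{N}(n^{k-1}p)^s$. For (b), the expected number of independent sets of size $m=\lceil \epsilon n/2\rceil$ is at most $2^n(1-p)^{\binom{m}{k}}\leq 2^n\exp(-p\binom{m}{k})$, and since $\binom{m}{k}=\Theta(n^k)$ this tends to $0$ provided $p\,n^k$ grows faster than $n$, that is, roughly $p\gg n^{1-k}$. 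Taking $p=n^{1-k+1/(2N)}$ gives $n^{k-1}p=n^{1/(2N)}$, whence $\mathbb{E}[X]\leq (N-1)n^{1/2}=o(n)$, so Markov's inequality yields (a), while $p\binom mk$ grows like a constant times $n^{1+1/(2N)}$, giving (b).

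The crux, and the step I expect to be the main obstacle, is choosing $p$ to satisfy both requirements at once: (b) forces the exponent of $p$ to exceed $1-k$, whereas naively making $\mathbb{E}[X]$ small would require it to lie below $1-k$. The deletion method is precisely what reconciles this, since it suffices to have $\mathbb{E}[X]=o(n)$ rather than $\mathbb{E}[X]\to 0$; this opens up the admissible window for the exponent to $(1-k,\,1-k+1/N)$, which is nonempty. Verifying that a value such as $1-k+1/(2N)$ lands in this window and checking the two probabilistic estimates is the technical heart of the argument, while the union and Markov bounds and the final alteration are routine.
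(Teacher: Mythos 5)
Your argument is correct. The paper does not prove this lemma at all---it is quoted as a known result of Erd\H{o}s and Hajnal \cite{erdos}---so there is no internal proof to compare against; what you have written is the standard probabilistic deletion-method proof, and the details check out: the circuit count $n^{(k-1)s}p^s$ is a valid upper bound (configurations with repeated hyperedges are not circuits, and every genuine circuit configuration has probability exactly $p^s$), the choice $p=n^{1-k+1/(2N)}$ does land in the admissible window, deleting a vertex from each short circuit destroys every circuit of length at most $N$ without creating new ones, and independence in the resulting hypergraph implies independence in the original since any hyperedge contained in a surviving vertex set survives the deletion. The one point worth stating explicitly in a final write-up is that the two high-probability events hold simultaneously for large $n$, so a suitable $\mathcal{H}_0$ exists; this is immediate from the union bound but should be said.
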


We will use such hypergraphs to construct graphs with desirable properties. The next lemma relates the girth of a hypergraph to the properties of small cycles in certain graphs constructed from that hypergraph.

\begin{lemma}\label{cycles}
Let $N\geq 2$ be an integer and let $\mathcal{H}=(V,\mathcal{E})$ be a hypergraph of girth greater than $N$. Let $G$ be a graph with vertex set $V$ such that for distinct $u,v\in V$, $u$ and  $v$ are adjacent in $G$ only if $u$ and $v$ have a common hyperedge in $\mathcal{E}$. Suppose there exists a cycle $C=v_1v_2\ldots v_kv_1$ in $G$ of length $k\leq N$. Let $e\in \mathcal{E}$ be a hyperedge containing some edge of $C$. Then $e$ contains every vertex of $C$.
\end{lemma}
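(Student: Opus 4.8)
The plan is to exploit the defining property of hypergraph girth. Since $N\ge 2$ and $\mathcal{H}$ has girth greater than $N$, there is in particular no circuit of length $2$, which by the remark in the excerpt means that no two distinct hyperedges can meet in two or more vertices. Consequently, for each edge $v_iv_{i+1}$ of the cycle $C$ there is a \emph{unique} hyperedge, call it $f_i$, containing both $v_i$ and $v_{i+1}$: such a hyperedge exists because $v_i\sim v_{i+1}$ in $G$, and two distinct such hyperedges would form a circuit of length $2$. (Indices are read cyclically, so $f_k$ contains $v_k$ and $v_1$.) Since $e$ contains both endpoints of some edge of $C$, uniqueness gives $e=f_j$ for that edge. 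I will show that $e$ contains every vertex of $C$ by proving that all the $f_i$ coincide; indeed, $e$ contains every $v_i$ precisely when $f_i=e$ for all $i$.

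First I would suppose, for contradiction, that the $f_i$ are not all equal, and collapse the cyclic sequence $f_1,\dots,f_k$ into maximal runs of equal hyperedges, obtaining distinct consecutive values $g_1,\dots,g_m$ with $g_j\neq g_{j+1}$ (cyclically) and $2\le m\le k\le N$. At the boundary between the run $g_j$ and the run $g_{j+1}$ there is a cycle vertex $w_j$ lying in both $g_j$ and $g_{j+1}$, namely the shared vertex of the two consecutive cycle-edges meeting at that boundary. The vertices $w_1,\dots,w_m$ are distinct, since they sit at distinct positions along $C$ and the $v_i$ are distinct. This yields a cyclic sequence of distinct vertices $w_1,\dots,w_m$ and hyperedges $g_1,\dots,g_m$ with $w_{j-1},w_j\in g_j$ for all $j$, i.e.\ a closed walk through the hyperedges.

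If the $g_j$ happen to be distinct, this is already a circuit of length $m\le N$, contradicting the girth hypothesis. The main technical point is to handle the possibility that a hyperedge is repeated, say $g_a=g_b$ for some $a<b$. In that case the common hyperedge contains both $w_a$ and $w_{b-1}$, so the shorter cyclic sequence of vertices $w_a,w_{a+1},\dots,w_{b-1}$ together with the hyperedges $g_{a+1},\dots,g_{b-1},g_a$ again forms a closed walk with distinct vertices and strictly fewer hyperedges. Iterating this reduction (equivalently, passing to a shortest such closed walk) eliminates all repetitions while keeping the length between $2$ and $N$, producing a genuine circuit of length at most $N$. This contradicts the assumption that $\mathcal{H}$ has girth greater than $N$, so the $f_i$ are all equal to $e$ and $e$ contains every vertex of $C$. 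I expect this extraction of an honest circuit from the closed walk of hyperedges to be the only delicate step; everything else follows directly from the fact that no two distinct hyperedges share two vertices.
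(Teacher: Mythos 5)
Your proof is correct and follows essentially the same route as the paper: both arguments track the hyperedges covering consecutive edges of $C$, collapse consecutive runs of equal hyperedges, and then extract a genuine circuit of length at most $N$ (handling possible repeats by passing to a minimal repeat-free cyclic subsequence), contradicting the girth hypothesis. Your preliminary observation that girth $>2$ makes the hyperedge $f_i$ covering each edge of $C$ unique is a small streamlining the paper does not use (it instead rotates the labelling so that $e_1\neq e_k$), but it does not change the substance of the argument.
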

\begin{proof}
Suppose for a contradiction that $e$ does not contain every vertex of $C$. Without loss of generality, $v_1,v_2\in e$. Define hyperedges $e_i\in \mathcal{E}$ for $1\leq i\leq k$ by letting $e_1=e$, by letting $e_i$ contain $v_i$ and $v_{i+1}$ for $1<i<k$, and by letting $e_k$ contain $v_k$ and $v_1$. Since $e_1$ does not contain every vertex of $C$, we may assume that $e_1$ and $e_k$ are distinct by choosing a suitable labeling of the vertices and hyperedges.

Define $i_1=1$ then recursively define $i_j$ for $j\geq 2$ to be least such that $e_{i_j}$ and $e_{i_{j-1}}$ are distinct, if such exists. Let $J$ be the maximum $j$ for which $i_j$ is defined. Note that $e_{i_J}=e_k$.

There are two cases to consider. First, if the sequence $e_{i_1},\ldots,e_{i_J}$ contains a repeated entry, then let $a,b\in\{1,\ldots,J\}$ with $a<b$ be such that $e_{i_a}=e_{i_b}$ and the sequence $e_{i_{a+1}},e_{i_{a+2}},\ldots,e_{i_b}$ contains no repeated entry (note that $b\geq a+2$ so this list has at least two members). This sequence of hyperedges with vertices $v_{i_{a+1}},\ldots,v_{i_{b}}$ forms a circuit in $\mathcal{H}$ of length less than $N$, which is a contradiction.

In the other case, the sequence $e_{i_1},\ldots,e_{i_J}$ contains no repeated entry so with vertices $v_{i_1},\ldots,v_{i_J}$ forms a circuit in $\mathcal{H}$ of length at most $N$, which again is a contradiction.
\end{proof}

\begin{def*}\label{prop_p}
Let $N, k_1, k_2\in \mathbb{N}$. We say that a graph $G$ has property $P(N,k_1,k_2)$ if it has a red-blue edge colouring such that every red subgraph of $G$ on $N$ vertices or fewer is $k_1$-vertex-colourable and every blue subgraph on $N$ vertices or fewer is $k_2$-vertex-colourable. We call such a colouring of $G$ an $(N,k_1,k_2)$-good colouring.
\end{def*}

The next lemma is based on a construction of Fox, Grinshpun, Liebenau, Person, and Szab\'o in \cite{fox}.

\begin{lemma}\label{l_exist}
Let $G$ be a graph with at least two vertices, let $\epsilon>0$ and let $N\geq 2$ be an integer. Then there exists a graph $L(G,\epsilon,N)$ such that the subgraph of $L$ induced by any set of at least $\epsilon|V(L)|$ of its vertices contains $G$ as a subgraph, and moreover such that if $G$ has property $P(N,k_1,k_2)$ for some $k_1, k_2\in \mathbb{N}$ then $L$ has this property too.
\end{lemma}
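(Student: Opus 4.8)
The plan is to build $L$ from a high-girth uniform hypergraph by placing a copy of $G$ on each hyperedge. Write $g=|V(G)|$, which is at least $2$ by hypothesis, and apply Lemma~\ref{hyps} with $k=g$ together with the given $\epsilon$ and $N$ to obtain a $g$-uniform hypergraph $\mathcal{H}=(V,\mathcal{E})$ of girth greater than $N$ and independence number less than $\epsilon|V|$. Let $L$ have vertex set $V$, and for each hyperedge $e\in\mathcal{E}$ fix a bijection from $V(G)$ to $e$ and add to $L$ the image of the edge set of $G$ under this bijection, so that $e$ carries a copy of $G$. By construction $u\sim v$ in $L$ only if $u$ and $v$ share a hyperedge, so $L$ and $\mathcal{H}$ satisfy the hypotheses of Lemma~\ref{cycles}.

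The first property is then immediate: if $S\subseteq V$ has $|S|\geq\epsilon|V|$, then since the independence number of $\mathcal{H}$ is less than $\epsilon|V|$, the set $S$ contains some hyperedge $e$, and the copy of $G$ placed on $e$ is a subgraph of $L[S]$. For the inheritance of property $P(N,k_1,k_2)$, I first note that girth greater than $N\geq 2$ forbids a circuit of length two, so no two distinct hyperedges meet in two vertices; hence every edge of $L$ lies in exactly one hyperedge. I can therefore define a red-blue colouring of $L$ without conflict by transporting a fixed $(N,k_1,k_2)$-good colouring of $G$ onto each hyperedge via its bijection. It remains to verify this colouring is good: given $T\subseteq V$ with $|T|\leq N$, letting $R$ be the subgraph of $L[T]$ formed by its red edges, I must show $R$ is $k_1$-colourable, the blue case being identical.

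The key step, which I expect to be the main obstacle, is to use Lemma~\ref{cycles} to pin down the block structure of $R$. Since $|T|\leq N$, every cycle in $R$ has length at most $N$, so by Lemma~\ref{cycles} all of its vertices lie in a single hyperedge. Because in any $2$-connected graph two edges lie on a common cycle, this should force each block of $R$ (a maximal $2$-connected subgraph, or a bridge) to be contained in one hyperedge $e$: fixing an edge of the block inside some hyperedge $h$ and connecting it by a cycle to any other edge of the block shows, via Lemma~\ref{cycles}, that every vertex of the block lies in $h$. Such a block then has at most $N$ vertices and only red edges, so under the bijection it is isomorphic to a subgraph of the red subgraph of $G$ on at most $N$ vertices, and is therefore $k_1$-colourable by goodness of the colouring of $G$.

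Finally I would assemble these block colourings. Since the chromatic number of a graph equals the maximum chromatic number over its blocks (one may recolour each block to agree at cut vertices and paste along the block-cut tree), the bound on each block gives that $R$ is $k_1$-colourable. Running the same argument with $k_2$ for the blue subgraph shows the transported colouring is $(N,k_1,k_2)$-good, so $L$ has property $P(N,k_1,k_2)$ whenever $G$ does, completing the proof.
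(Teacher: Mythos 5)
Your proof is correct, and your construction (a copy of $G$ placed on each hyperedge of a high-girth, low-independence-number $|V(G)|$-uniform hypergraph from Lemma~\ref{hyps}) together with the argument for the first property is exactly the paper's. Where you diverge is in proving that property $P(N,k_1,k_2)$ is inherited. The paper argues by minimal counterexample: it takes a minimal red subgraph $H$ on at most $N$ vertices with chromatic number exceeding $k_1$, picks a copy $G_0$ of $G$ containing an edge of $H$, and uses Lemma~\ref{cycles} to show that each component of $H$ minus a component of $H\cap G_0$ attaches back at a single cut vertex, so that some proper subgraph of $H$ already has the same chromatic number, a contradiction. You instead prove the explicit structural statement that every block of the red subgraph lies inside a single hyperedge (via Lemma~\ref{cycles} applied to a cycle through two given edges of a $2$-connected block), and then invoke the standard fact that the chromatic number of a graph is the maximum of the chromatic numbers of its blocks. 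The two arguments rest on the same geometric input (Lemma~\ref{cycles}) and the same gluing-at-cut-vertices principle for chromatic number; yours makes the block structure explicit and avoids the induction on a minimal counterexample, which is arguably more transparent, while the paper's is self-contained and does not quote the block decomposition machinery. Your observation that girth greater than two ensures each edge of $L$ lies in a unique hyperedge, so the transported colouring is well defined, is a point the paper leaves implicit and is worth making.
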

\begin{proof}
Let $\mathcal{H}=(V,\mathcal{E})$ be a $|V(G)|$-uniform hypergraph with independence number less than $\epsilon|V|$ and girth greater than $N$. This exists by Lemma~\ref{hyps}. Construct a graph $L$ on vertex set $V$ by placing a copy of $G$ in each hyperedge in $\mathcal{E}$. Then since $\mathcal{H}$ has independence number less than $\epsilon|V|$, any set of at least $\epsilon|V|$ of its vertices contains some hyperedge, so the subgraph of $L$ induced by these vertices contains a copy of $G$.

Suppose that $G$ has property $P(N,k_1,k_2)$ for some $k_1, k_2\in \mathbb{N}$. Colour the edges of $L$ by colouring each of the copies of $G$ with an $(N,k_1,k_2)$-good colouring. We claim that this is an $(N,k_1,k_2)$-good colouring of $L$.

Let $H$ be a red subgraph of this colouring with $N$ vertices or fewer. Suppose for a contradiction that $H$ has chromatic number $t>k_1$, and assume that $H$ is minimal with this property. Note that $H$ contains an edge and is not contained in any of the copies of $G$. Let $G_0$ be a copy of $G$ in $L$ which contains an edge of $H$. Let $W_0$ be the vertex set of $G_0$. Let $H_0$ be a connected component of $H[W_0]$ containing at least one edge. Let $V_0$ be the vertex set of $H_0$. 

Consider the connected components of $H-V_0$, and label them $H_1,H_2,\ldots,H_m$ with vertex sets $V_1,V_2,\ldots,V_m$ respectively. By Lemma~\ref{cycles}, any cycle in $L$ of length at most $N$ and containing an edge of $H_0$ must be contained in $G_0$. Hence for every $1\leq i\leq m$, there exists at most one vertex in $V_0$ which has edges to $H_i$. If such a vertex exists, then label it $v_i$ and define ${H'_i}=H[V_i\cup \{v_i\}]$. Otherwise let $H'_i=H[V_i]$. Then either $H_0$ is $t$-chromatic, or $H'_i$ is $t$-chromatic for some $1\leq i\leq m$. All of these graphs have fewer vertices than $H$ which contradicts the minimality of $H$.

Therefore $H$ is $k_1$-colourable, and similarly any blue subgraph of this colouring with $N$ vertices or fewer is $k_2$-colourable.
\end{proof}

Before proving Theorem~\ref{main} we finally state the following `focussing lemma' proved in \cite{fox}.
\begin{lemma}\label{focussing}
Let $G$ be a complete bipartite graph with partite sets $A$ and $B$. Consider a 2-edge-colouring of the edges of $G$. Then there exist subsets $A'\subseteq A$ and $B'\subseteq B$ with $|A'|\geq |A|/2$ and $|B'|\geq |B|/2^{|A|}$ such that the complete bipartite graph with partite sets $A'$ and $B'$ is monochromatic.
\end{lemma}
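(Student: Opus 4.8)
The plan is to run a double pigeonhole argument based on the ``colour profiles'' of the vertices in $B$. For each vertex $b\in B$, the restriction of the given 2-edge-colouring to the edges incident to $b$ is recorded by a function $c_b\colon A\to\{\text{red},\text{blue}\}$, where $c_b(a)$ is the colour of the edge $ab$. I would regard $c_b$ as the \emph{type} of $b$. Since each type is a function from $A$ to a two-element set, there are exactly $2^{|A|}$ possible types, so the first step is to partition $B$ according to type and apply the pigeonhole principle.

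By pigeonhole, some type is realised by at least $|B|/2^{|A|}$ of the vertices of $B$. Let $B'\subseteq B$ be the set of all vertices sharing this most common type, so that $|B'|\geq |B|/2^{|A|}$ and there is a single function $c\colon A\to\{\text{red},\text{blue}\}$ with $c_b=c$ for every $b\in B'$. The key consequence is that the colour of any edge between $A$ and $B'$ now depends only on its endpoint in $A$: for all $a\in A$ and all $b\in B'$, the edge $ab$ has colour $c(a)$.

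The second step is to split $A$ according to $c$. We have $A=c^{-1}(\text{red})\cup c^{-1}(\text{blue})$, and these two sets partition $A$, so at least one of them has size at least $|A|/2$; let $A'$ be such a set, giving $|A'|\geq |A|/2$. Every edge between $A'$ and $B'$ then has the single colour common to the vertices of $A'$ under $c$, so the complete bipartite graph with partite sets $A'$ and $B'$ is monochromatic, completing the proof.

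I do not anticipate a genuine obstacle here, since the argument is a clean two-stage pigeonhole: first over the $2^{|A|}$ colour types to fix $B'$, and then over the two colours to fix $A'$. The only point requiring care is the order of the two stages — one must first homogenise the types across $B'$ so that the colouring restricted to $A'\times B'$ factors through $A$ alone, and only afterwards pass to the majority colour class of $A$. Reversing the order would fail to produce a single monochromatic block, so I would be careful to present the steps in exactly this sequence.
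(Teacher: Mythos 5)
Your proof is correct. Note that the paper itself gives no proof of this lemma --- it is stated as a known result, citing Fox, Grinshpun, Liebenau, Person, and Szab\'o \cite{fox} --- and your double-pigeonhole argument (first homogenising the colour profiles $c_b\colon A\to\{\text{red},\text{blue}\}$ over $B$ to obtain $B'$ with $|B'|\geq |B|/2^{|A|}$, then taking the majority colour class of $A$ under the common profile to obtain $A'$ with $|A'|\geq |A|/2$) is exactly the standard proof found in that source, with the bounds matching the statement precisely.
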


We're now ready to prove Theorem~\ref{main}.

\begin{proof}[Proof of Theorem~\ref{main}]
If $\chi(G)=1$ then the result is trivial so assume otherwise. Let $M = |V(G)|$, let $N = |V(H)|$, and let $\chi=\chi(G)$. Note that $M,N\geq 2$. We will construct a graph which is Ramsey for $K_\chi(M)$ (and hence for $G$), but which has property $P(N,\chi,\chi)$ (and hence has a colouring without a monochromatic copy of $H$).

We recursively define graphs $F_i$ for $i\in \mathbb{N}_0$. Let $F_0$ be the graph consisting of $M$ independent vertices. For $i\in \mathbb{N}_0$, let $L_i$ be the graph $L(F_i,\epsilon,N)$ given by Lemma~\ref{l_exist}, where $\epsilon = 2^{-2M}$. Then define $F_{i+1}$ to consist of a copy of $L_i$ on vertex set $B_{i+1}$ and a disjoint set $A_{i+1}$ of $2M$ independent vertices, with $A_{i+1}$ and $B_{i+1}$ forming the partite sets of a complete bipartite graph (see Figure 1).

\begin{figure}[ht]\label{main_fig}
	\centering
	\includegraphics[width=0.8\linewidth]{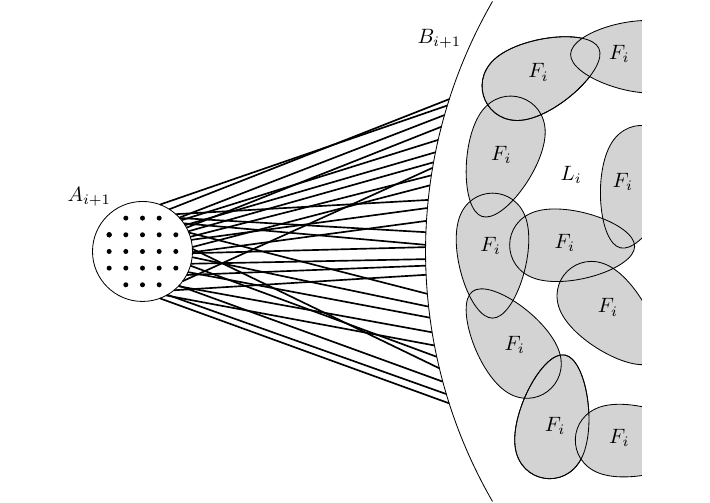}
	\caption{The construction of $F_{i+1}$ from $F_i$.}
\end{figure}

\begin{claim}
For all $i\in \mathbb{N}_0$ and for all $k_1,k_2\in \mathbb{N}$ such that $k_1+k_2=i+2$, $F_i$ has property $P(N,k_1,k_2)$.
Moreover, for all $i\in \mathbb{N}_0$ and for any 2-edge-colouring of $F_i$ there exist $m_1,m_2\in \mathbb{N}$ with $m_1+m_2=i+2$ such that $F_i$ contains monochromatic copies of $K_{m_1}(M)$ and $K_{m_2}(M)$ in different colours.
\end{claim}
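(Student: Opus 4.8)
The plan is to prove both statements simultaneously by induction on $i$. The recursive construction of $F_{i+1}$ from $F_i$ suggests that each "layer" of the construction increases the total colour budget $i+2$ by one, so the inductive step must show how passing from $F_i$ to $F_{i+1}$ bumps $k_1+k_2$ (and $m_1+m_2$) from $i+2$ to $i+3$.

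First I would verify the base case $i=0$. Here $F_0$ consists of $M$ independent vertices, so it has no edges; both the red and blue subgraphs are edgeless and hence $1$-vertex-colourable, giving property $P(N,1,1)$, which matches $k_1+k_2=2$. For the monochromatic part, any $2$-edge-colouring of the edgeless $F_0$ trivially contains $K_1(M)=K_M(1)$ — an independent set of size $M$ — in each colour vacuously, so $m_1=m_2=1$ works.

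For the inductive step on the property-$P$ statement, I would use Lemma~\ref{l_exist} crucially. Suppose $F_i$ has property $P(N,k_1,k_2)$ whenever $k_1+k_2=i+2$. Recall $F_{i+1}$ consists of a copy of $L_i=L(F_i,\epsilon,N)$ on $B_{i+1}$, an independent set $A_{i+1}$, and a complete bipartite graph between them. Given a target $k_1+k_2=i+3$, I would colour the complete bipartite edges all red, colour the edgeless $A_{i+1}$ arbitrarily, and use the good colouring of $L_i$ inherited (via Lemma~\ref{l_exist}) from a $P(N,k_1-1,k_2)$-good colouring of $F_i$ — legitimate since $(k_1-1)+k_2=i+2$. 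The point is that any small red subgraph can be properly coloured using the $k_1-1$ colours available on the $L_i$ part plus one extra colour for all the $A_{i+1}$ vertices (which form an independent set and are joined to $B_{i+1}$ only in red, so they must avoid the $L_i$ colours but can share one new colour among themselves); blue subgraphs live entirely inside $L_i$ and use $k_2$ colours. One must handle the symmetric case $k_1=1$ by colouring the bipartite edges blue instead.

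The main obstacle — and where I would focus the most care — is the monochromatic embedding statement in the inductive step, which relies on the focussing Lemma~\ref{focussing}. Given any $2$-colouring of $F_{i+1}$, the focussing lemma applied to the complete bipartite graph between $A_{i+1}$ (of size $2M$) and $B_{i+1}$ yields $A'\subseteq A_{i+1}$ with $|A'|\ge M$ and $B'\subseteq B_{i+1}$ with $|B'|\ge 2^{-2M}|B_{i+1}|=\epsilon|V(L_i)|$ such that all $A'$-to-$B'$ edges share one colour, say red. Since $|B'|\ge\epsilon|V(L_i)|$, the defining property of $L_i$ from Lemma~\ref{l_exist} guarantees that $L_i[B']$ contains a copy of $F_i$; applying the induction hypothesis to the restriction of the colouring to this copy yields monochromatic copies of $K_{m_1}(M)$ and $K_{m_2}(M)$ inside $B'$ with $m_1+m_2=i+2$ in different colours. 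The delicate step is then to upgrade one of these into a larger clique-blow-up using the monochromatic red bipartite edges to $A'$: taking the red copy of $K_{m_r}(M)$ found inside $B'$ and adjoining $A'$ (which has at least $M$ vertices, all joined in red to $B'$) as a fresh partite class produces a red $K_{m_r+1}(M)$, while the blue copy survives unchanged. I would need to check carefully that $A'$ indeed forms a valid new partite class — it is independent and completely joined in red to the $B'$-side blow-up — so the new totals are $(m_r+1)+m_{\bar r}=i+3$, completing the induction.
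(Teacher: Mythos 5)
Your proposal is correct and follows essentially the same route as the paper: induction on $i$, colouring the bipartite edges monochromatically and inheriting an $(N,k_1-1,k_2)$-good colouring of $L_i$ via Lemma~\ref{l_exist} for the property-$P$ part, and applying the focussing lemma to extract a monochromatic complete bipartite graph onto a copy of $F_i$ and then adjoining $M$ vertices of $A'$ as a new partite class for the embedding part. The only blemish is the notational slip ``$K_1(M)=K_M(1)$'' (the latter is $K_M$, not an independent set), which does not affect the argument.
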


We prove both parts of the claim by induction on $i$. For $i=0$, trivially $F_0$ has property $P(N,1,1)$ and any 2-edge-colouring of $F_0$ contains both a red and blue copy of $K_1(M)$. Now let $i\in\mathbb{N}_0$ and consider $F_{i+1}$. Let $k_1,k_2\in \mathbb{N}$ with $k_1+k_2=i+3$. Without loss of generality $k_1\neq 1$, so $F_i$ has property $P(N,k_1-1,k_2)$ by the induction hypothesis, and hence by construction $L_i$ has this property too. Consider a colouring of $F_{i+1}$ in which we colour the copy of $L_i$ with an $(N,k_1-1,k_2)$-good colouring and colour the complete bipartite graph joining $A_{i+1}$ and $B_{i+1}$ red. This is an $(N,k_1,k_2)$-good colouring of $F_{i+1}$ as required.

Next consider a 2-edge-colouring of $F_{i+1}$. By Lemma~\ref{focussing} there exist subsets $A'_{i+1}\subseteq A_{i+1}$ and $B'_{i+1}\subseteq B_{i+1}$ with $|A'_{i+1}|=|A_{i+1}|/2=M$ and $|B'_{i+1}|\geq|B_{i+1}|/2^{2M} = \epsilon|B_{i+1}|$ such that the complete bipartite graph between $A'_{i+1}$ and $B'_{i+1}$ is monochromatic. By the construction of $L_i$, the subgraph of $L_i$ induced by $B'_{i+1}$ contains a copy of $F_i$. Hence the 2-edge-colouring of $F_{i+1}$ contains a copy of $F_i$ and a disjoint set of $M$ independent vertices which form the partite sets of a monochromatic complete bipartite graph.

By the induction hypothesis there exist $m'_1,m'_2\in \mathbb{N}$ with $m'_1+m'_2=i+2$ such that this copy of $F_i$ contains monochromatic copies of $K_{m'_1}(M)$ and $K_{m'_2}(M)$ in different colours. With the monochromatic complete bipartite graph from the copy of $F_i$ to the disjoint set of $M$ vertices, these either form monochromatic copies of $K_{m'_1+1}(M)$ and $K_{m'_2}(M)$ in different colours, or monochromatic copies of $K_{m'_1}(M)$ and $K_{m'_2+1}(M)$ in different colours. This completes the proof of the claim.

Finally, consider $F_{2\chi-3}$. By the claim this has property $P(N,\chi,\chi-1)$ (and hence in particular property $P(N,\chi,\chi)$), and any 2-edge-colouring of it contains a monochromatic $K_\chi(M)$. It is therefore Ramsey for $G$ but not for $H$, as required.
\end{proof}

\section{Extensions and open problems}\label{sect_ext_open}

In this section we will give two further results which can be proved using a similar technique to that used on Theorem~\ref{main}, and then highlight some interesting related open problems.

\subsection{Generalisation to multiple colours}\label{sub_multi}
We first consider the natural generalisation of the notion of Ramsey equivalence to that of $q$-Ramsey equivalence for an integer $q\geq 2$. A graph $G$ is called $q$-Ramsey for $H$ if every $q$-colouring of the edges of $G$ contains a monochromatic copy of $H$, and we write this as $G\rightarrow (H)_q$. We say that $H_1$ and $H_2$ are $q$-Ramsey equivalent, or just $q$-equivalent, if for all graphs $G$, $G\rightarrow (H_1)_q$ if and only if $G\rightarrow (H_2)_q$. The introduction of \cite{clemens} provides a good summary of what is known about $q$-equivalence, and about the interplay between $q$-equivalence and $r$-equivalence for $q\neq r$. 

In particular, in that paper it is shown that the graphs $K_3$ and $K_3 + K_2$ are $q$-equivalent for all $q\geq 3$, but it is straightforward to see that $K_6$ is $2$-Ramsey for $K_3$ but not for $K_3+K_2$ (e.g. see \cite{szabo}). This implies that in general we cannot deduce anything about $q$-equivalence from non-2-equivalence for $q\geq 3$. Hence the following theorem is not an immediate consequence of Theorem~\ref{main}.

\begin{theorem}\label{multi}
Let $G$ and $H$ be graphs with $\chi(G)<\chi(H)$, then for all integers $q\geq 2$ there exists a graph $F$ such that $F\rightarrow (G)_q$ and $F\not \rightarrow (H)_q$.
\end{theorem}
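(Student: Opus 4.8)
The plan is to follow the proof of Theorem~\ref{main} almost verbatim, replacing each two-colour ingredient by its $q$-colour analogue. Write $\chi=\chi(G)$, $M=|V(G)|$, $N=|V(H)|$, and assume $\chi\geq 2$ (the case $\chi=1$ being trivial). First I would record the $q$-colour versions of the three tools. Property $P(N,k_1,\ldots,k_q)$ should ask for a $q$-edge-colouring in which every subgraph in colour $j$ on at most $N$ vertices is $k_j$-vertex-colourable. The proof of Lemma~\ref{l_exist} then goes through unchanged, one colour at a time: the construction of $L(G,\epsilon,N)$ and its density property are colour-agnostic, and the minimal-counterexample argument via Lemma~\ref{cycles} only ever examines a single colour class, so $L$ inherits $P(N,k_1,\ldots,k_q)$ exactly as before. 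The focussing lemma generalises by a direct pigeonhole argument: given a $q$-colouring of the complete bipartite graph on $A\cup B$, there are only $q^{|A|}$ possible colour patterns on the edges from a vertex of $B$ to $A$, so some pattern is shared by a set $B'\subseteq B$ with $|B'|\geq|B|/q^{|A|}$, and the largest colour class $A'$ of this common pattern satisfies $|A'|\geq|A|/q$ with the bipartite graph between $A'$ and $B'$ monochromatic.

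Next I would run the same recursion, enlarging the bipartite part to compensate for the weaker focussing bound. Set $\epsilon=q^{-qM}$, let $F_0$ be $M$ independent vertices, let $L_i=L(F_i,\epsilon,N)$, and let $F_{i+1}$ consist of a copy of $L_i$ on $B_{i+1}$ together with a set $A_{i+1}$ of $qM$ independent vertices, joined completely. The choice $|A_{i+1}|=qM$ forces $|A'_{i+1}|\geq qM/q=M$, and $\epsilon=q^{-qM}$ exactly matches the focussing bound $|B'_{i+1}|\geq|B_{i+1}|/q^{qM}$, so that the subgraph induced on $B'_{i+1}$ still contains a copy of $F_i$.

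The heart is the $q$-colour analogue of the Claim: for every $i$, (i) $F_i$ has property $P(N,k_1,\ldots,k_q)$ whenever $k_1+\cdots+k_q=i+q$, and (ii) every $q$-colouring of $F_i$ contains, for each colour $j$, a monochromatic $K_{m_j}(M)$ in colour $j$, where $m_1+\cdots+m_q=i+q$. Both are proved by induction as before. For (i), since the sum exceeds $q$ there is a colour $\ell$ with $k_\ell\geq 2$; colour $L_i$ with an $(N,k_1,\ldots,k_\ell-1,\ldots,k_q)$-good colouring and the bipartite part entirely in colour $\ell$, and observe that because $A_{i+1}$ is independent any colour-$\ell$ subgraph needs only one extra colour for its vertices in $A_{i+1}$, while every other colour class lies inside $L_i$. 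For (ii), focussing yields $A'_{i+1},B'_{i+1}$ with the bipartite graph monochromatic in some colour $\ell$ and a copy of $F_i$ inside $B'_{i+1}$; the $M$ vertices of $A'_{i+1}$ then form a new partite class attached monochromatically to the colour-$\ell$ clique $K_{m'_\ell}(M)$ supplied by the induction hypothesis inside $F_i$, raising its index by one and leaving the other $m'_j$ unchanged.

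Finally I would stop at $F_I$ with $I=q(\chi-2)+1$, so that $I+q=q(\chi-1)+1$. By part (ii) any $q$-colouring must have some $m_j\geq\chi$, since otherwise $\sum_j m_j\leq q(\chi-1)<I+q$; this gives a monochromatic $K_\chi(M)\supseteq G$, so $F_I\rightarrow(G)_q$. By part (i), applied to the tuple $(\chi,\chi-1,\ldots,\chi-1)$, which sums to $q(\chi-1)+1$ and has every entry at most $\chi<\chi(H)$, there is a $q$-colouring in which no colour class on at most $N$ vertices can contain $H$, so $F_I\not\rightarrow(H)_q$. The step I expect to require the most care is precisely this final bookkeeping: the index $I$ must be large enough for the pigeonhole in (ii) to force a part of size $\chi$, yet small enough that (i) still offers a tuple with every entry strictly below $\chi(H)$. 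These two demands are compatible exactly because $q\leq q\chi-\bigl(q(\chi-1)+1\bigr)+q$, equivalently $q(\chi-1)+1\leq q\chi$, which is the $q$-colour counterpart of the role played by $2\chi-3$ in the proof of Theorem~\ref{main}.
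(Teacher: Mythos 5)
Your proposal is correct and matches the paper's own argument essentially step for step: the same generalised property $P_q$, the same generalised versions of Lemma~\ref{l_exist} and the focussing lemma, the same choices $\epsilon=q^{-qM}$ and $|A_{i+1}|=qM$, the same two-part inductive claim, and the same terminal graph $F_{q(\chi-2)+1}$ with the tuple $(\chi,\chi-1,\ldots,\chi-1)$. The final bookkeeping you flag as delicate checks out exactly as you computed.
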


We can prove this result by modifying the arguments of Section~\ref{sect_main_proof}. For given $q\geq 2$ and $N,k_1,\ldots,k_q\in \mathbb{N}$, we will say that graph $G$ has property $P_q(N,k_1,\ldots,k_q)$ if it has an edge colouring in colours $1,\ldots,q$ such that every monochromatic subgraph of $G$ in colour $i$ on $N$ vertices or fewer is $k_i$-vertex-colourable. Call such a colouring an $(N,k_1,\ldots,k_q)_q$-good colouring of $G$. We can then generalise Lemma~\ref{l_exist} by replacing `$P(N,k_1,k_2)$ for some $k_1,k_2\in \mathbb{N}$' in its statement by `$P_q(N,k_1,\ldots,k_q)$ for some $q\geq 2$ and $k_1,\ldots,k_q\in \mathbb{N}$'. This can be proved by a straightforward modification of the proof of Lemma~\ref{l_exist} given above (indeed, the construction of the graph $L$ need not be changed).

A straightforward generalisation of the proof of the focussing lemma in \cite{fox} gives the following: \textit{let $G$ be a complete bipartite graph with partite sets $A$ and $B$. Let $q\geq 2$ be an integer and consider a $q$-edge-colouring of the edges of $G$. Then there exist subsets $A'\subseteq A$ and $B'\subseteq B$ with $|A'|\geq |A|/q$ and $|B'|\geq |B|/q^{|A|}$ such that the complete bipartite graph with partite sets $A'$ and $B'$ is monochromatic.}

Finally we modify the proof of Theorem~\ref{main}. As before, if $\chi(G)=1$ then the result is trivial so assume otherwise. Define $M=|V(G)|$, $N=|V(H)|$, and $\chi=\chi(G)$, noting that $M,N\geq2$. Define $F_0$ as before, then for $i\in\mathbb{N}_0$ define $L_i$ to be the graph $L(F_i,\epsilon,N)$ given by the modified Lemma~\ref{l_exist}, where $\epsilon=q^{-qM}$. Define $F_{i+1}$ to consist of a copy of $L_i$ on vertex set $B_{i+1}$ and a disjoint set $A_{i+1}$ of $qM$ independent vertices with these two forming the partite sets of a complete bipartite graph.

We make the following modified claim: \textit{for all $i\in \mathbb{N}_0$ and for all $k_1,\ldots,k_q\in\mathbb{N}$ such that $k_1+\ldots+k_q=i+q$, $F_i$ has property $P_q(N,k_1,\ldots,k_q)$. Moreover, for all $i\in \mathbb{N}_0$ and for any $q$-edge-colouring of $F_i$ there exist $m_1,\ldots,m_q\in \mathbb{N}$ with $m_1+\ldots+m_q=i+q$ such that $F_i$ contains monochromatic copies of $K_{m_1}(M)$,\ldots,$K_{m_q}(M)$ in colours $1,\ldots,q$ respectively.}

This can be proved with a straightforward generalisation of the proof of the original claim. We finally consider $F_{q(\chi-2)+1}$, which by the claim has property $P_q(N,\chi,\chi-1,\ldots,\chi-1)$ so is not $q$-Ramsey for $H$, but any $q$-edge-colouring of it contains a monochromatic $K_\chi(M)$ so it's $q$-Ramsey for $G$.

We remark that this construction can also be applied to the asymmetric version of the problem. For $q\geq 2$, we say that a graph $G$ is Ramsey for a $q$-tuple of graphs $(H_1,\dots,H_q)$ if for every colouring of the edges of $G$ in colours $1,\dots,q$, there exists $i\in\{1,\dots,q\}$ such that $G$ contains a monochromatic copy of $H_i$ in colour $i$. We will say that two $q$-tuples of graphs are Ramsey equivalent if the sets of graphs which are Ramsey for each are equal.

Let $(H_1,\dots,H_q)$ and $(H_1',\dots,H_q')$ be two $q$-tuples of graphs. Let $\chi_i=\chi(H_i)$ and $\chi_i'=\chi(H_i')$ for all $i$. Suppose that $\chi_i, \chi_i'\geq 2$ for all $i$, and that $\sum_{i=1}^q\chi_i<\sum_{i=1}^q\chi_i'$. Setting $M=\max\{|V(H_i)|:1\leq i \leq q\}$ and $N=\max\{|V(H_i')|:1\leq i \leq q\}$, then defining graphs $F_i$ as above, we see that by the modified claim $F_{\sum_i\chi_i-2q+1}$ has property $P_q(N, \chi_1'-1,\dots,\chi_q'-1)$ and hence is not Ramsey for $(H_1',\dots,H_q')$. Also by the modified claim, $F_{\sum_i\chi_i-2q+1}$ contains an $i$-coloured copy of $K_{\chi_i}(M)$ for some $i$ and hence is Ramsey for $(H_1,\dots,H_q)$. Thus if two $q$-tuples of graphs, each of which have chromatic number at least 2, have different sums of chromatic numbers, then they are not Ramsey equivalent.

\subsection{Another Ramsey distinguishing parameter}\label{sub_a}

For an integer $q\geq 2$ we describe a graph parameter $s$ as being $q$-Ramsey distinguishing, or just $q$-distinguishing, if $s(H_1)\neq s(H_2)$ implies that $H_1$ and $H_2$ are not $q$-equivalent. In \cite{nesetril, nesetril_2} Ne\v{s}et\v{r}il and R\"{o}dl showed that for all $q\geq2$ and for all graphs $H$ there exist graphs $G_1$ and $G_2$ with $\omega(G_1)=\omega(H)$ and $g_o(G_2)=g_o(H)$ such that $G_1\rightarrow (H)_q$ and $G_2\rightarrow (H)_q$. Hence clique number and odd girth are $q$-distinguishing for all $q\geq2$. 

R\"{o}dl and Ruci\'{n}ski's random Ramsey theorem described in Section \ref{intro} for 2 colours generalises to more colours (again see \cite{RR1, RR2, RR3} for their original proof, and \cite{nenadov} for a shorter, more recent proof due to Nenadov and Steger). The multi-colour result states that for each fixed $q>2$ and graph $F$ which is not a forest of stars, there exist positive constants $c$ and $C$ such that \[\lim_{n\rightarrow\infty}\mathbb{P}\big(G(n,p) \rightarrow (F)_q\big) = \begin{cases} 0 &\text{if $p\leq cn^{-1/m_2(F)}$},\\ 1 &\text{if $p\geq Cn^{-1/m_2(F)}$},
\end{cases}\] where we recall that $m_2(F)$ denotes the 2-density of $F$.

If $F$ is a forest of stars and $q>2$, then if $G$ is a disjoint union of sufficiently many sufficiently large stars, then $G$ is $q$-Ramsey for $F$, but not $q$-Ramsey for any graph which is not a forest of stars. Thus a forest of stars can only be $q$-equivalent to another forest of stars. All forests with a component of size at least 3 have 2-density equal to 1, and if a graph has no component of size at least 3, then it can only be $q$-equivalent to other graphs of this type. Thus for all $q>2$ the 2-density is $q$-distinguishing among graphs with a component of size at least 3, and no graph with such a component is $q$-equivalent to one without. Recall from Section \ref{intro} that the same is true for the case $q=2$.

In Section~\ref{sub_multi} we showed that chromatic number is $q$-distinguishing for all $q\geq 2$. Define $a(G)$ to be the minimum number of vertices which can be given colour 1 in a proper vertex colouring of $G$ in colours $1,\ldots,\chi(G)$. We can use a similar proof to show that $a$ is $q$-distinguishing for all $q\geq 2$.

\begin{theorem}\label{extra_distinguishing}
Let $G$ and $H$ be graphs with $a(G)<a(H)$, then for all integers $q\geq 2$, $G$ and $H$ are not $q$-Ramsey equivalent.
\end{theorem}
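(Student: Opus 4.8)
The plan is to mirror the structure of the proof of Theorem~\ref{multi}, but to replace the quantity $\chi$ by the parameter $a$ and to track, rather than the number of colour classes, the size of the smallest colour class. We are given $a(G)<a(H)$; let $M=|V(G)|$, $N=|V(H)|$, and observe that since $a$ is at least $1$ and $a(G)<a(H)$ we have $\chi(G),\chi(H)\geq 2$ and $a(H)\geq 2$. The goal is to construct, for each $q\geq 2$, a graph $F$ with $F\rightarrow(G)_q$ but $F\not\rightarrow(H)_q$. The idea is that a copy of $K_{\chi(G)}(M)$ built up in a monochromatic fashion will contain $G$, so forcing such a clique-blow-up in some colour guarantees $q$-Ramseyness for $G$; meanwhile, a good colouring in which \emph{every} colour class, restricted to any $N$-vertex subgraph, admits a proper colouring whose colour-$1$ class is smaller than $a(H)$ will certify that no monochromatic copy of $H$ appears.

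First I would introduce the refined property: say $G$ has property $P_q^a(N,a_1,\ldots,a_q)$ if it has a $q$-edge-colouring such that, for each $i$, every monochromatic subgraph in colour $i$ on at most $N$ vertices admits a proper vertex colouring in which colour $1$ is used on fewer than $a_i$ vertices (equivalently, has $a(\cdot)<a_i$ on each small monochromatic piece). The key structural input, Lemma~\ref{l_exist}, carries over verbatim, because its proof only uses that being ``$k_i$-colourable on small monochromatic subgraphs'' is a property closed under the local surgery performed there; the same surgery (splitting off connected components attached by at most one vertex, as guaranteed by Lemma~\ref{cycles}) preserves the existence of a proper colouring with a bounded colour-$1$ class, since one can $2$-colour a cut vertex freely and recolour to keep the colour-$1$ count additive across the pieces. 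I would state this generalisation and note it follows by the same argument.

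Next I would reuse the focussing lemma and the recursive construction of the $F_i$ unchanged, with $\epsilon=q^{-qM}$ and $|A_{i+1}|=qM$. The heart of the matter is the modified claim, which I expect to be the main obstacle and the place requiring genuine care. One direction should read: for all $i$ and all $a_1,\ldots,a_q\in\mathbb{N}$ with $a_1+\cdots+a_q=i+q$ (or a suitably chosen target sum), $F_i$ has property $P_q^a$ with these parameters; the other direction should guarantee that any $q$-colouring of $F_i$ yields monochromatic blow-ups $K_{m_1}(M),\ldots,K_{m_q}(M)$ whose parameters $m_j$ satisfy a budget from which one can extract, in some colour, a copy of $G$. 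The subtlety is that $a$ is not simply additive under the ``add a new partite class, joined monochromatically'' operation the way $\chi$ was: adding one new independent class of $M$ vertices to a monochromatic $K_{m}(M)$ increases $\chi$ by exactly one, but its effect on $a$ depends on whether the new class becomes the smallest. I would therefore set up the induction to track, in each colour, enough information to control $a$ of the resulting blow-up — most naturally by tracking the number of partite classes $m_j$ and recalling that $a(K_{m}(M))=M$ whenever $m\geq 2$, so that a monochromatic $K_{\chi(G)}(M)$ in some colour forces $a$ at least $M\geq a(G)$ and indeed contains $G$ outright.

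Concretely, I would aim to show that $F_{q(t-2)+1}$, for the right threshold $t$, has property $P_q^a(N,a(H),a(H)-1,\ldots,a(H)-1)$ — certifying $F\not\rightarrow(H)_q$ because every small monochromatic subgraph then properly colours with colour-$1$ class strictly below $a(H)$, hence contains no copy of $H$ — while the pigeonhole/focussing half of the claim forces, in at least one colour, a monochromatic $K_{\chi(G)}(M)\supseteq G$, giving $F\rightarrow(G)_q$. The main technical point to verify carefully is the base arithmetic linking the budget $a_1+\cdots+a_q$ to the guaranteed blow-up sizes, ensuring that once the per-colour allowance in the good colouring stays below $a(H)$ in every colour, the forced blow-up still reaches the $\chi(G)$-partite threshold needed to embed $G$; I would resolve this by choosing the total budget and the number of recursion steps so that the pigeonhole gap between ``small class forced below $a(H)$'' and ``some class forced up to $\chi(G)$ parts'' is strictly positive, exactly as $\chi(G)<\chi(H)$ drove the original argument and $a(G)<a(H)$ must drive this one.
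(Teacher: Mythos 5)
There is a genuine gap, and it sits exactly where you flag ``the main obstacle'': the two halves of your plan are mutually inconsistent. You propose to certify $F\rightarrow(G)_q$ by forcing a monochromatic $K_{\chi(G)}(M)$ with $M=|V(G)|$. But by Theorem~\ref{multi} the only case left to handle is $\chi(G)=\chi(H)$, and in that case if $|V(G)|\geq|V(H)|$ then $H\subseteq K_{\chi(H)}(|V(H)|)\subseteq K_{\chi(G)}(M)$, so \emph{any} graph that forces a monochromatic $K_{\chi(G)}(M)$ is automatically $q$-Ramsey for $H$ as well. No choice of ``budget'' or ``number of recursion steps'' can make a graph both force a monochromatic $K_{\chi(G)}(M)$ and fail to be $q$-Ramsey for $H$. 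The paper's proof deliberately stops the recursion one step earlier, at $F_{q(\chi-2)}$, which has the ordinary property $P_q(N,\chi-1,\ldots,\chi-1)$ and forces only the weaker dichotomy ``either a monochromatic $K_\chi(M)$, or monochromatic copies of $K_{\chi-1}(M)$ in every colour.'' The parameter $a$ then enters solely through one final, explicitly constructed layer: an independent set $A$ of $q\cdot a(G)$ vertices completely joined to $L(F_{q(\chi-2)},q^{-q\cdot a(G)},N)$. On the Ramsey side, the focussing lemma hands the $K_{\chi-1}(M)$'s a further monochromatically-joined independent set of size $a(G)$, which suffices to embed $G$ (this is where $a(G)$ is used); on the non-Ramsey side, $A$ is split into $q$ parts of size $a(G)$ with the edges at part $S_i$ coloured $i$, so a monochromatic $H$ would admit a proper $\chi(H)$-colouring whose colour-$1$ class has size at most $a(G)<a(H)$.

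Your proposed replacement property $P_q^a$ also does not do the job you want of it. As stated (``admits a proper vertex colouring in which colour $1$ is used on fewer than $a_i$ vertices'') it is essentially vacuous, since without fixing the total number of colours at $\chi(H)$ one can always put a single vertex in colour $1$; and even with the number of colours fixed, ``every small monochromatic subgraph $H'$ has $a(H')<a(H)$'' does not exclude $H'\supseteq H$, because $a$ is not monotone under taking subgraphs (a $\chi(H')$-colouring of $H'$ need not restrict to a $\chi(H)$-colouring of $H$ witnessing a small colour-$1$ class, e.g.\ if $\chi(H')>\chi(H)$ or if colour $1$ misses $H$ entirely). The correct certificate is the hybrid one above: $(\chi-1)$-colourability of small monochromatic subgraphs inside $B$, plus the explicit bound $|S_i|=a(G)$ on the colour-$1$ class coming from $A$. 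You correctly sense that $a$ fails to be additive under the ``join a new class monochromatically'' step, but the resolution is not a more careful induction on an $a$-type property --- it is to not run that induction at all and to handle $a$ only in the final layer.
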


Indeed, let $q\geq 2$ be an integer and note that by Theorem~\ref{multi} we can assume that $\chi(G)=\chi(H)$. If $\chi(G)=1$ then the result is trivial, so assume otherwise and define $M = |V(G)|$, $N = |V(H)|$, $\chi = \chi(G)$ and $\epsilon=q^{-qM}$, then construct graphs $F_i$ for $i\in\mathbb{N}_0$ as in Section~\ref{sub_multi}.

Consider $F_{q(\chi-2)}$. We know that this has property $P_q(N,\chi-1,\ldots,\chi-1)$ and every $q$-edge-colouring of it either contains a monochromatic $K_\chi(M)$ or monochromatic copies of $K_{\chi-1}(M)$ in every colour. Let $L=L(F_{q(\chi-2)},q^{-q\cdot a(G)},N)$ from the modified Lemma~\ref{l_exist} from Section~\ref{sub_multi}, then define $F$ to consist of a copy of $L$ on vertex set $B$ and a disjoint set $A$ of $q\cdot a(G)$ independent vertices, with $A$ and $B$ forming the partite sets of a complete bipartite graph.

Then $F\rightarrow (G)_q$ because by the modified focussing lemma from Section~\ref{sub_multi}, any $q$-edge-colouring of $F$ contains a copy of $F_{q(\chi-2)}$ and a disjoint set of $a(G)$ independent vertices which form the partite sets of a monochromatic complete bipartite graph. Either this copy of $F_{q(\chi-2)}$ contains a monochromatic copy of $K_\chi(M)$ (and hence a monochromatic copy of $G$) or it contains monochromatic copies of $K_{\chi-1}(M)$ in every colour, which with the monochromatic complete bipartite graph to the set of $a(G)$ independent vertices gives a monochromatic copy of $G$.

We now show $F\not \rightarrow (H)_q$ by giving a $q$-edge-colouring of $F$ not containing a monochromatic copy of $H$. Colour the copy of $L$ in $F$ with an $(N,\chi-1,\ldots,\chi-1)_q$-good colouring (such a colouring exists by the construction of $L$). Partition $A$ into $q$ sets $S_1,\ldots,S_q$ each containing $a(G)$ vertices, then colour the edges of $F$ containing a vertex in $S_i$ with colour $i$ for each $1\leq i\leq q$. Suppose this colouring contains some monochromatic copy of $H$, say on vertex set $V$. Colour the vertices of this copy of $H$ by colouring the vertices in $V\cap B$ with a proper $(\chi-1)$-colouring in colours $2,\ldots,\chi$ and colouring the vertices in $V\cap A$ with colour 1. This gives a proper $\chi$-vertex-colouring of $H$ in which at most $a(G)<a(H)$ vertices receive colour 1, which is a contradiction.

\subsection{Related open problems}\label{sub_open}
In this paper we have found two new Ramsey distinguishing parameters. It is natural to start with the following question, asked by Axenovich, Rollin, and Ueckerdt in \cite{axenovich}.
\begin{question}\label{q_others}
Do there exist other graph parameters which are Ramsey distinguishing?
\end{question}

As mentioned in the introduction, the next question is one of the most fundamental concerning Ramsey equivalence. It was first posed in \cite{fox}.
\begin{question}\label{q_main}
Do there exist two non-isomorphic connected graphs which are Ramsey equivalent?
\end{question}
The results in this paper add to a body of evidence for a negative answer to Question~\ref{q_main}. Using Ramsey distinguishing parameters we know that any pair of graphs $H_1$ and $H_2$ that are Ramsey equivalent must have the same clique number, chromatic number, odd girth, minimal partite set size, and, if they both have a component of size at least 3, the same 2-density. A graph $G$ is called Ramsey isolated if $G\not\sim H$ for all connected graphs $H$ which are not isomorphic to $G$. It is known that every clique \cite{fox}, every path, every star, and every connected graph on at most five vertices (all \cite{axenovich}) is Ramsey isolated. Furthermore, it was shown by Clemens, Liebenau, and Reding in \cite{clemens} that no pair of non-isomorphic 3-connected graphs are Ramsey equivalent.

In light of Theorem~\ref{main} we can identify some new families of Ramsey isolated graphs. In \cite{axenovich} it was shown that if a connected graph $G$ satisfies \textit{(i)} there exists an independent set $S\subseteq V(G)$ such that $\omega(G-S)<\omega(G)$, and \textit{(ii)} there exists a proper $\chi(G)$-vertex-colouring of $G$ in which the subgraph induced on some two colour classes is a matching, then any connected graph $H$ which is Ramsey equivalent to $G$ but is not isomorphic to $G$ satisfies $\chi(H)<\chi(G)$. They also observe that the same is true if we replace the clique number by the negative of odd girth in \textit{(i)}. Call this modified version of the condition \textit{(i)'}. 

Combining these results with Theorem~\ref{main} we have that any connected graph $G$ satisfying \textit{(ii)} and either \textit{(i)} or \textit{(i)'} is Ramsey isolated. In particular this implies that every odd cycle is Ramsey isolated.

Finally, Theorems~\ref{multi} and \ref{extra_distinguishing} support positive answers to the following two questions posed in \cite{clemens} concerning the interplay between 2- and 3-equivalence.
\begin{question}\label{q_multi}
\begin{enumerate}[label=(\alph*)]
\item\label{q_multi_a} If graphs $H_1$ and $H_2$ are 2-equivalent, must they also be 3-equivalent?
\item If connected graphs $H_1$ and $H_2$ are 3-equivalent, must they also be 2-equivalent?
\end{enumerate}
\end{question}
It was shown in \cite{clemens} that for non-negative integers $a,b,q,$ and $r$ where $q,r\geq 2$, if $H_1$ and $H_2$ are $q$- and $r$-equivalent then they are $(aq+br)$-equivalent. Thus a positive answer to Question~\ref{q_multi}\ref{q_multi_a} would imply that if two graphs are 2-equivalent, then they are $q$-equivalent for all $q\geq 2$.

\section*{Acknowledgements}\label{ack}
The author would like to thank the University of Cambridge Faculty of Mathematics Summer Research Scheme and the Lister Fund of Queens' College, Cambridge for funding this work, Julia Gog and James Kelly for their work to ensure funding was found, and Thomas Bloom for his guidance and helpful discussions.

Thanks also to the anonymous referees for their helpful comments and suggestions, particularly in pointing out that 2-density is Ramsey distinguishing, and that the construction in Section \ref{sub_multi} can be applied to the asymmetric version of the problem.

\Address

\end{document}